\newtheorem{theorem}{Theorem}[section]
\newtheorem{lemma}{Lemma}[section]
\theoremstyle{definition}
\theoremstyle{remark}
\newtheorem{remark}{Remark}[section]
\numberwithin{equation}{section}
\theoremstyle{plain}
\newtheorem{proposition}{Proposition}[section]
\newtheorem{example}{Example}[section]
\begin{document}
\title[Exceptional Rare Events]{\textbf{Hitting-time Limits for some Exceptional Rare Events of Ergodic Maps}\\\vspace{0.2cm}}
\author{Roland Zweim\"{u}ller}
\address{Fakult\"{a}t f\"{u}r Mathematik, Universit\"{a}t Wien, Oskar-Morgenstern-Platz
1, 1090 Vienna, Austria }
\email{roland.zweimueller@univie.ac.at}
\urladdr{http://www.mat.univie.ac.at/\symbol{126}zweimueller/ }
\subjclass[2010]{Primary 28D05, 37E05.}
\keywords{limit distribution, rare events, hitting time statistics, periodic points,
indifferent fixed points}

\begin{abstract}
We discuss limit distributions for hitting-time functions of certain
\textquotedblleft exceptional\textquotedblright\ families of asymptotically
rare events for ergodic probability preserving transformations. The abstract
core is an inducing argument. The latter applies, for example, to shrinking
intervals around periodic points (both uniformly expanding and neutral) of
certain finite measure preserving interval maps. In particular, we give a
complete answer to a question raised in \cite{FFTV}.

\end{abstract}
\maketitle

\section{Introduction}

Hitting- and return-time statistics for asymptotically rare events in ergodic
dynamical systems have been the subject of intense research over the last
years. For a wide variety of probability preserving systems with some
hyperbolicity it has been shown that for natural families of rare events, like
cylinders or $\varepsilon$-balls shrinking to a given distinguished point
$x^{\ast}$, convergence (after normalization by the measure of the sets) to a
standard exponential random variable $\mathcal{E}$ (with $\Pr[\mathcal{E}%
>t]=e^{-t}$ for $t\geq0$) is typical in that it holds for almost every point
$x^{\ast}$. Nonetheless, there are often exceptional points at which a
different asymptotic behaviour is observed. It is not hard to understand that
this should be so in the case of periodic points, when a definite proportion
of the set returns after a fixed number of steps, thus giving rise to a point
mass at the origin for the limit of scaled return-times. (For some basic
classes of systems dichotomy results have been established which confirm that
there are no other exceptional points.) There has also been some interest in
neutral repellers, which lead to a trivial limit under the usual
normalization, but may in fact give rise to a nice limit law when a different
scale is used.

The purpose of the present note is to communicate an abstract inducing
argument which can be used to clarify the asymptotics of such exceptional rare
events once it is known that the standard exponential limit arises in certain
situations. It can lead to straightforward and quick proofs. This will be
illustrated in the setup of simple prototypical piecewise invertible expanding
interval maps. We focus on the basic case of fixed points. Extending the
arguments to periodic points only requires routine arguments, and hardly gives
new insights.

First, we show that the (well-known) exceptional hitting-time limit
$\theta^{-1}\mathcal{E}$ with expectation $\theta^{-1}>1$ at a repelling
hyperbolic fixed point of a well-behaved map can be obtained using this
approach. Next, we demonstrate that the term \textquotedblleft
well-behaved\textquotedblright\ in the previous sentence is there for a
reason. We construct a map which looks nice enough\ (a uniformly expanding
piecewise affine Markov map) but nonetheless admits a hyperbolic fixed point
at which the limit variable is a standard exponential $\mathcal{E}$ and not
the exceptional $\theta^{-1}\mathcal{E}$ suggested by the first scenario.

Finally, we turn to the main application of our abstract inducing principle
and consider probability preserving maps with neutral fixed points. Here we
answer a question raised in \cite{FFTV} by clarifying the asymptotic
hitting-time behaviour of neighbourhoods of an indifferent fixed point (with
rather general local behaviour).\newline

\noindent\textbf{Acknowledgement.} R.Z. thanks Gerhard Keller and Sebastian
Fischer for conversations related to this topic, and the referee for carefully
reading the manuscript. This research has benefitted from support by
\emph{\"{O}AD grant 92\"{o}u6}, and from participation at the thematic
semester on \emph{Dynamics and Geometry} at the \emph{Centre Henri
Lebesgue}.\newline

\section{Preparations}

\noindent\textbf{General setup.} Throughout the paper, \emph{all measures are
understood to be finite}. We study (possibly non-invertible) \emph{measure
preserving transformations} $T$ on $(X,\mathcal{A},\mu)$, i.e. measurable maps
$T:X\rightarrow X$ for which $\mu\circ T^{-1}=\mu$. The transformation $T$
will also be \emph{ergodic}. For such a system $(X,\mathcal{A},\mu,T)$, and
any $Y\in\mathcal{A}$ with $\mu(Y)>0$, define the \emph{first hitting time
}function of $Y$, $\varphi_{Y}:X\rightarrow\mathbb{N}\cup\{\infty\}$ by
$\varphi_{Y}(x):=\min\{n\geq1:T^{n}x\in Y\} $, $x\in X$, which is finite a.e.
by ergodicity and the Poincar\'{e} recurrence theorem. Set $T_{Y}%
x:=T^{\varphi(x)}x$, $x\in X$. When restricted to $Y$, $\varphi_{Y}$ is called
the \emph{first return time }of $Y$. If we let $Y\cap\mathcal{A}:=\{Y\cap
A:A\in\mathcal{A}\}$ denote the trace of $\mathcal{A}$ in $Y$, then $\mu
\mid_{Y\cap\mathcal{A}}$ is invariant under the \emph{first return map},
$T_{Y}\,$restricted to $Y$. It is natural to regard $\varphi_{Y}$ as a random
variable on the probability space $(X,\mathcal{A},\mu_{Y})$, where $\mu
_{Y}(E):=\mu(Y)^{-1}\mu(Y\cap E)$. By Kac' formula, it has expectation
$\int\varphi_{Y}\,d\mu_{Y}=\mu(Y)^{-1}$.

In this setup, a sequence $(E_{k})_{k\geq1}$ in $\mathcal{A}$ with $\mu
(E_{k})>0$ and $\mu(E_{k})\rightarrow0$ will be referred to as a sequence of
\emph{asymptotically rare events}. Asking for an \emph{asymptotic hitting-time
distribution} or \emph{hitting-time statistics} (\emph{HTS}) means to look for
normalizing constants $\gamma_{k}>0$ and a nontrivial random variable
$\mathsf{R}$ taking values in $[0,\infty]$ such that
\begin{equation}
\mu(\gamma_{k}\cdot\varphi_{E_{k}}\leq t)\Longrightarrow\Pr[\mathsf{R}\leq
t]\text{ \quad as }k\rightarrow\infty\text{.} \label{Eq_HITTLIM}%
\end{equation}
(Here, of course, the symbol $\Longrightarrow$ means that convergence takes
place at continuity points $t$ of the respective limit distribution function).
By Kac' formula, a canonical candidate for $\gamma_{k}$ is given by $\mu
(E_{k})$.\vspace{0.3cm}

\noindent\textbf{Change of measure.} It is a fact, both interesting by itself
and useful as a technical tool, that the convergence (\ref{Eq_HITTLIM})
automatically carries over to all probabilities $\nu\ll\mu$. Given a sequence
$(R_{k})_{k\geq1}$ of measurable functions and a measure $\nu$ on
$(X,\mathcal{A})$, we write
\begin{equation}
R_{k}\overset{\nu}{\Longrightarrow}\mathsf{R}%
\end{equation}
to indicate convergence in law of the $R_{k}$, viewed as random variables on
the on the probability space $(X,\mathcal{A},\nu)$, to a variable $\mathsf{R}
$. For instance, (\ref{Eq_HITTLIM}) can then be expressed as $\gamma_{k}%
\cdot\varphi_{E_{k}}\overset{\mu}{\Longrightarrow}\mathsf{R}$. Corollary 5 of
\cite{Z2} contains the following

\begin{theorem}
[\textbf{Strong distributional convergence of hitting times; } \cite{Z2}%
]\label{T_StrongDistrCgeHittingTimes}Let $(X,\mathcal{A},\mu,T)$ be an ergodic
probability-preserving system, $(E_{k})_{k\geq1}$ a sequence of asymptotically
rare events, $(\gamma_{k})$ a sequence in $(0,\infty)$ with $\gamma
_{k}\rightarrow0$, and $\mathsf{R}$ any random variable with values in
$[0,\infty]$. Then
\begin{equation}
\gamma_{k}\cdot\varphi_{E_{k}}\overset{\nu}{\Longrightarrow}\mathsf{R}\text{
\quad as }k\rightarrow\infty
\end{equation}
holds for \emph{one} probability measure $\nu\ll\mu$ iff it holds for
\emph{all} probabilities $\nu\ll\mu$.
\end{theorem}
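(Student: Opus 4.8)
The plan is to deduce Theorem~\ref{T_StrongDistrCgeHittingTimes} from a general transfer principle of Eagleson type: if $(R_{k})_{k\geq1}$ is a sequence of $[0,\infty]$-valued measurable functions on $(X,\mathcal{A},\mu)$ which is \emph{asymptotically $T$-invariant}, meaning that $R_{k}\circ T-R_{k}\to0$ in $\mu$-measure (the increments being read via a bounded metric metrising the compact space $[0,\infty]$, so that large values do no harm), then $R_{k}\overset{\nu_{0}}{\Longrightarrow}\mathsf{R}$ for \emph{one} probability $\nu_{0}\ll\mu$ already forces $R_{k}\overset{\nu}{\Longrightarrow}\mathsf{R}$ for \emph{every} probability $\nu\ll\mu$. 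Granting this, the theorem follows by taking $R_{k}:=\gamma_{k}\cdot\varphi_{E_{k}}$, since only the asymptotic $T$-invariance of these functions needs to be verified, and that is elementary: whenever $Tx\notin E_{k}$, the forward orbit of $Tx$ meets $E_{k}$ exactly one step sooner than that of $x$, so that $\varphi_{E_{k}}\circ T=\varphi_{E_{k}}-1$ on $X\setminus T^{-1}E_{k}$ and hence $\gamma_{k}\cdot\varphi_{E_{k}}\circ T-\gamma_{k}\cdot\varphi_{E_{k}}=\gamma_{k}\to0$ there; the exceptional set $T^{-1}E_{k}$ has $\mu(T^{-1}E_{k})=\mu(E_{k})\to0$, so $\gamma_{k}\cdot\varphi_{E_{k}}\circ T-\gamma_{k}\cdot\varphi_{E_{k}}\to0$ in $\mu$-measure.

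For the transfer principle, the direction ``all $\Rightarrow$ one'' is trivial ($\mu$ itself is $\ll\mu$). For the converse it suffices to show that, for any two probabilities $\nu_{i}=h_{i}\,\mu\ll\mu$ ($i=0,1$), one has $\int f(R_{k})\,d\nu_{1}-\int f(R_{k})\,d\nu_{0}\to0$ for every $f\in C([0,\infty])$ (which, by the Portmanteau theorem on the compact space $[0,\infty]$, is equivalent to convergence in law as in \eqref{Eq_HITTLIM}); then convergence for one $\nu_{0}$ with limit $\mathsf{R}$ propagates to every $\nu_{1}$. Assume first $h_{0},h_{1}\in L^{\infty}(\mu)$ and put $g:=h_{1}-h_{0}$, so $\int g\,d\mu=0$. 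Introduce the Birkhoff averages $A_{N}^{(k)}:=\tfrac1N\sum_{n=0}^{N-1}f(R_{k})\circ T^{n}$ and the transfer operator $\widehat{T}$ of $(X,\mathcal{A},\mu,T)$ (determined by $\int(\psi\circ T^{n})\,\phi\,d\mu=\int\psi\,\widehat{T}^{n}\phi\,d\mu$). Then $\int A_{N}^{(k)}\,g\,d\mu=\int f(R_{k})\cdot\bigl(\tfrac1N\sum_{n<N}\widehat{T}^{n}g\bigr)\,d\mu$ exactly, and therefore
\[
\int f(R_{k})\,g\,d\mu=\int\bigl(f(R_{k})-A_{N}^{(k)}\bigr)\,g\,d\mu+\int f(R_{k})\cdot\Bigl(\tfrac1N\sum_{n<N}\widehat{T}^{n}g\Bigr)\,d\mu .
\]
In the first term, $|f(R_{k})-A_{N}^{(k)}|\leq\tfrac1N\sum_{n<N}|f(R_{k})-f(R_{k})\circ T^{n}|$; since $\mu$ is $T$-invariant, $R_{k}\circ T^{n}-R_{k}\to0$ in $\mu$-measure for each fixed $n$, and $f$ uniformly continuous and bounded, together with $g$ bounded, make each of the finitely many (for $N$ fixed) summands tend to $0$ as $k\to\infty$. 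The second term is bounded by $\|f\|_{\infty}\,\bigl\|\tfrac1N\sum_{n<N}\widehat{T}^{n}g\bigr\|_{L^{1}(\mu)}$, and by the mean ergodic theorem for the ergodic Markov operator $\widehat{T}$, using $\int g\,d\mu=0$, this $L^{1}$-norm tends to $0$ as $N\to\infty$, \emph{independently of $k$}. Letting $k\to\infty$ first and $N\to\infty$ afterwards yields $\int f(R_{k})\,g\,d\mu\to0$; the general case $h_{i}\in L^{1}(\mu)$ follows by approximating $h_{i}\mu$ in total variation by bounded-density probabilities, the bound $|f|\leq\|f\|_{\infty}$ being uniform in $k$.

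The main obstacle is exactly the interchange of these two limits: the ``replacement'' term is controlled only for fixed $N$ as $k\to\infty$, and its natural bound degrades as $N$ grows, whereas the ``ergodic'' term is controlled only in the limit $N\to\infty$ but then uniformly in $k$; everything hinges on ordering the limits as ``$k\to\infty$, then $N\to\infty$'' and keeping each estimate uniform in the relevant variable. The remaining points are routine once this is in place: $\varphi_{E_{k}}$ may equal $\infty$ on a $\mu$-null set (harmless, as the statements are phrased in measure and in law), and ``convergence in measure'' of the increments $R_{k}\circ T-R_{k}$ must be read with respect to a bounded metric on $[0,\infty]$, which is precisely what makes the estimate $\gamma_{k}\cdot\varphi_{E_{k}}\circ T-\gamma_{k}\cdot\varphi_{E_{k}}=\gamma_{k}$ off $T^{-1}E_{k}$ do its job.
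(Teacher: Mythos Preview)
The paper does not give its own proof of this theorem; it simply quotes the statement as Corollary~5 of \cite{Z2}. Your argument is correct and is, in fact, a faithful reconstruction of the approach in \cite{Z2}: first isolate a general Eagleson-type transfer principle (asymptotic $T$-invariance of $(R_{k})$ in $\mu$-measure implies that distributional convergence under one $\nu\ll\mu$ propagates to all $\nu\ll\mu$), then check the asymptotic invariance for $R_{k}=\gamma_{k}\varphi_{E_{k}}$ via the elementary identity $\varphi_{E_{k}}\circ T=\varphi_{E_{k}}-1$ off $T^{-1}E_{k}$ together with $\mu(T^{-1}E_{k})=\mu(E_{k})\to0$ and $\gamma_{k}\to0$. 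Your proof of the transfer principle itself (splitting $\int f(R_{k})\,g\,d\mu$ into a ``replacement'' term controlled for fixed $N$ as $k\to\infty$ and an ``ergodic'' term controlled by the $L^{1}$ mean ergodic theorem for $\widehat{T}$ as $N\to\infty$, uniformly in $k$) is exactly the mechanism behind the results in \cite{Z2}; the order of limits is handled correctly, and the reduction to bounded densities by $L^{1}$-approximation is routine. One cosmetic point: on $X\setminus T^{-1}E_{k}$ the increment is $\gamma_{k}\varphi_{E_{k}}\circ T-\gamma_{k}\varphi_{E_{k}}=-\gamma_{k}$, not $+\gamma_{k}$; this of course does not affect the conclusion.
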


\vspace{0.2cm}

We record another basic observation regarding changes of measure. Let
$\overset{\nu}{\longrightarrow}$ denote convergence in measure with respect to
$\nu$.

\begin{proposition}
[\textbf{Characterizing asymptotically rare sequences}]%
\label{P_MeasureByMeasure}Suppose that $(X,\mathcal{A},\mu,T)$ is an ergodic
probability-preserving system and $(E_{k})_{k\geq1}$ a sequence in
$\mathcal{A}$. Then the following are equivalent:\vspace{0.1cm}

\begin{enumerate}
\item[(a)] $\mu(E_{k})\rightarrow0$ as $k\rightarrow\infty$,\vspace{0.15cm}

\item[(b)] $\nu(E_{k})\rightarrow0$ as $k\rightarrow\infty$ for \emph{all}
probabilities $\nu\ll\mu$,\vspace{0.1cm}

\item[(c)] $\varphi_{E_{k}}\overset{\mu}{\longrightarrow}\infty$ as
$k\rightarrow\infty$,\vspace{0.1cm}

\item[(d)] $\varphi_{E_{k}}\overset{\nu}{\longrightarrow}\infty$ as
$k\rightarrow\infty$\ for \emph{all} probabilities $\nu\ll\mu$,\vspace{0.1cm}

\item[(e)] $\varphi_{E_{k}}\overset{\nu}{\longrightarrow}\infty$ as
$k\rightarrow\infty$\ for \emph{some} probability $\nu\ll\mu$.
\end{enumerate}
\end{proposition}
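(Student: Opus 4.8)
The plan is to run the cycle (a)$\Rightarrow$(c)$\Rightarrow$(a), the equivalence (a)$\Leftrightarrow$(b), and the chain (a)$\Rightarrow$(d)$\Rightarrow$(e)$\Rightarrow$(a); all but the last implication are bookkeeping with the identity $\{\varphi_{E_{k}}\le N\}=\bigcup_{j=1}^{N}T^{-j}E_{k}$, the $T$-invariance of $\mu$, and the standard reformulation of absolute continuity (for finite $\nu\ll\mu$: for every $\varepsilon>0$ there is $\delta>0$ with $\mu(A)<\delta\Rightarrow\nu(A)<\varepsilon$). Concretely, $\mu(T^{-j}E_{k})=\mu(E_{k})$ together with the displayed identity gives $\mu(E_{k})\le\mu(\varphi_{E_{k}}\le N)\le N\mu(E_{k})$ for every $N\ge1$, so (a)$\Leftrightarrow$(c); (a)$\Rightarrow$(b) holds since $\mu(E_{k})<\delta$ eventually, while (b)$\Rightarrow$(a) is the case $\nu=\mu$; for (a)$\Rightarrow$(d) each sequence $(T^{-j}E_{k})_{k}$ has $\mu$-measure tending to $0$, hence $\nu$-measure tending to $0$ for every $\nu\ll\mu$, and a union bound over $j\le N$ yields $\nu(\varphi_{E_{k}}\le N)\to0$; finally (d)$\Rightarrow$(e) is trivial.

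The substance is (e)$\Rightarrow$(a): transferring a smallness statement from some auxiliary $\nu\ll\mu$ back to $\mu$. I would invoke the mean ergodic theorem for the transfer operator $\widehat{T}$ of $(X,\mathcal{A},\mu,T)$ (its Dunford--Schwartz form in $L^{1}(\mu)$; alternatively, truncate $h:=d\nu/d\mu$ and use von Neumann in $L^{2}$): since $\mu$ is ergodic the $\widehat{T}$-invariant functions in $L^{1}(\mu)$ are the constants, while $\widehat{T}\mathbf{1}=\mathbf{1}$ and $\int h\,d\mu=1$, so $\varepsilon_{n}:=\bigl\|\tfrac1n\sum_{i=0}^{n-1}\widehat{T}^{i}h-\mathbf{1}\bigr\|_{L^{1}(\mu)}\to0$; equivalently $\tfrac1n\sum_{i=0}^{n-1}\nu\circ T^{-i}\to\mu$ in total variation, i.e.\ $\bigl|\tfrac1n\sum_{i=0}^{n-1}\nu(T^{-i}A)-\mu(A)\bigr|\le\varepsilon_{n}$ for \emph{every} $A\in\mathcal{A}$. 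Applying this to $A=T^{-1}E_{k}$ (and using $\mu(T^{-1}E_{k})=\mu(E_{k})$) turns the left-hand sum into $\tfrac1n\sum_{j=1}^{n}\nu(T^{-j}E_{k})$, and since $T^{-j}E_{k}\subseteq\{\varphi_{E_{k}}\le j\}\subseteq\{\varphi_{E_{k}}\le n\}$ for $1\le j\le n$,
\[
\mu(E_{k})\le\frac1n\sum_{j=1}^{n}\nu(T^{-j}E_{k})+\varepsilon_{n}\le\nu(\varphi_{E_{k}}\le n)+\varepsilon_{n}.
\]
Fixing $n$ and letting $k\to\infty$, hypothesis (e) gives $\nu(\varphi_{E_{k}}\le n)\to0$, so $\limsup_{k}\mu(E_{k})\le\varepsilon_{n}$; letting $n\to\infty$ then yields $\mu(E_{k})\to0$, which is (a).

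I expect the only real obstacle to be the uniformity in $k$ at this last step: a naive route via Birkhoff's theorem and Egorov applied to a single set would show merely, for each individual $k$, that $\nu(\varphi_{E_{k}}\le n)$ is small once $n$ is large enough depending on $k$, which is useless here. Routing through $\widehat{T}$ repairs this precisely because the error $\varepsilon_{n}$ is attached to the one fixed density $h$ and is completely independent of the sets $E_{k}$ — the estimate $|\int_{A}(\,\cdot\,)\,d\mu|\le\|\,\cdot\,\|_{L^{1}(\mu)}$ being uniform over $A$. Everything else is routine.
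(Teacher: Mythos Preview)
Your argument is correct. The easy implications match the paper's, and your (e)$\Rightarrow$(a) via the $L^{1}$ mean ergodic theorem for $\widehat{T}$ is clean: the key inequality $\mu(E_{k})\le\nu(\varphi_{E_{k}}\le n)+\varepsilon_{n}$, with $\varepsilon_{n}$ depending only on $h=d\nu/d\mu$, delivers exactly the uniformity in $k$ that you rightly flag as the issue.

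The paper proceeds differently at the nontrivial step. Instead of Ces\`aro averaging, it argues (e)$\Rightarrow$(b) by contradiction: given $\widetilde{\nu}\ll\mu$ with $\widetilde{\nu}(E_{k_{j}})\ge\delta$ along a subsequence, it uses only that $\sum_{n\ge1}\widehat{T}^{n}u=\infty$ $\mu$-a.e.\ (a consequence of conservativity and ergodicity, with $u=d\nu/d\mu$) to find a fixed $N$ with $\{\sum_{n=1}^{N}\widehat{T}^{n}u\ge1\}$ large for $\widetilde{\nu}$, and then bounds $\widetilde{\nu}(E_{k_{j}})$ from above by $\|\,d\widetilde{\nu}/d\mu\,\|_{\infty}\sum_{n=1}^{N}\nu(T^{-n}E_{k_{j}})$, contradicting (e). Your route is more direct and quantitative in the probability-preserving setting; the paper's route is chosen because it survives without an invariant probability --- it only needs $T$ to be null-preserving, conservative and ergodic on a $\sigma$-finite space, which is precisely the generalization recorded in the Remark following the proposition. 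Your mean-ergodic argument does not extend to that setting as stated.
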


\begin{proof}
\textbf{(i)} Equivalence of (a) and (b) is immediate from the
\textquotedblleft continuity\textquotedblright\ characterization of absolute
continuity, and so is equivalence of (c) and (d).

To check that (b) entails (d), fix $\nu\ll\mu$ and some $N\geq1$. Then
$\nu(\varphi_{E_{k}}\leq N)=\nu({\textstyle\bigcup_{n=1}^{N}} T^{-n}E_{k}%
)\leq\sum_{n=1}^{N}\nu(T^{-n}E_{k})$. But for every $n\geq1$, we have
$\nu(T^{-n}E_{k})\rightarrow0$ as $k\rightarrow\infty$ by (b) because $\nu
\ll\mu$. Hence (d) follows as $\nu(\varphi_{E_{k}}\leq N)\rightarrow0$.
\newline\newline\textbf{(ii)} To establish the more interesting fact that (e)
implies (b), take $\nu$ as in (e) and choose another probability
$\widetilde{\nu}\ll\mu$. To prove $\widetilde{\nu}(E_{k})\rightarrow0$ assume
the contrary, meaning that there are $\delta>0$ and $k_{j}\nearrow\infty$ such
that $\widetilde{\nu}(E_{k_{j}})\geq\delta$ for all $j\geq1$. We show that
this contradicts $\varphi_{E_{k}}\overset{\nu}{\longrightarrow}\infty$.

Set $u:=d\nu/d\mu$ and $\widetilde{u}:=d\widetilde{\nu}/d\mu$. We first
consider the case where $\left\Vert \widetilde{u}\right\Vert _{\infty}<\infty
$. Let $\widehat{T}$ denote the transfer operator of $T$ with respect to $\mu
$, so that $\int(f\circ T)\,g\,d\mu=\int f\,\widehat{T}g\,d\mu$ whenever $f\in
L_{\infty}(\mu)$ and $g\in L_{1}(\mu)$. Since $T$ is ergodic and recurrent, we
have $\sum_{n\geq1}\widehat{T}^{n}u=\infty$ $\mu$-a.e. on $X$. Therefore we
can choose (and fix) some $N\geq1$ such that $F:=\{{\textstyle\sum
\nolimits_{n=1}^{N}}\widehat{T}^{n}u\geq1\}$ satisfies $\widetilde{\nu}%
(F^{c})<\delta/2$. Then,
\begin{equation}
\widetilde{\nu}\left(  E_{k_{j}}\cap F\right)  \geq\delta/2\text{ \quad for
}j\geq1\text{.} \label{Eq_nbvchfgdsv}%
\end{equation}
Also,
\begin{align}
\widetilde{\nu}\left(  E_{k_{j}}\cap F\right)   &  ={\textstyle\int
}1_{E_{k_{j}}}1_{F}\,\widetilde{u}\,d\mu\\
&  \leq\left\Vert \widetilde{u}\right\Vert _{\infty}{\textstyle\int
}1_{E_{k_{j}}}\left(  {\textstyle\sum\nolimits_{n=1}^{N}}\widehat{T}%
^{n}u\right)  \,d\mu\nonumber\\
&  =\left\Vert \widetilde{u}\right\Vert _{\infty}{\textstyle\sum
\nolimits_{n=1}^{N}}\nu(T^{-n}E_{k_{j}})\text{.}\nonumber
\end{align}
Combining this with (\ref{Eq_nbvchfgdsv}), we see that for every $j\geq1$
there is some $n_{j}\in\{1,\ldots,N\}$ such that $\nu(T^{-n_{j}}E_{k_{j}}%
)\geq\delta^{\prime}:=\delta/(2N\left\Vert \widetilde{u}\right\Vert _{\infty
})>0$. But $\varphi_{E}\leq N$ on $T^{-n}E $ if $1\leq n\leq N$. Hence we
conclude that $\nu(\varphi_{E_{k_{j}}}\leq N)\geq\delta^{\prime}$ for $j\geq1$.

Finally, in the case of unbounded $\widetilde{u}$, we still have
$\widetilde{\nu}\ll\overline{\nu}$ for some probability $\overline{\nu}\ll\mu$
with a bounded density $\overline{u}:=d\overline{\nu}/d\mu$. By the above,
$\overline{\nu}(E_{k})\rightarrow0$ which easily implies $\widetilde{\nu
}(E_{k})\rightarrow0$ by absolute continuity.
\end{proof}

\vspace{0.2cm}

\begin{remark}
[\textbf{A null-preserving }$\sigma$\textbf{-finite version of the
proposition}]The proof above is formulated in such a way that it actually
shows the following: Let $(X,\mathcal{A},\mu)$ be a $\sigma$-finite measure
space and $T$ a \emph{null-preserving} map (measurable with $\mu\circ
T^{-1}\ll\mu$) which is \emph{conservative} (that is recurrent, $A\subseteq
{\textstyle\bigcup_{n\geq1}} T^{-n}A$ (mod $\mu$) for all $A\in\mathcal{A}$)
and \emph{ergodic} ($A=T^{-1}A\in\mathcal{A}$ implies $0\in\{\mu(A),\mu
(A^{c})\}$). Then, for any sequence $(E_{k})_{k\geq1}$ in $\mathcal{A}$,
statements (b), (d) and (e) above are equivalent.

This extension is of interest in situations with an \emph{infinite} invariant
measure $\mu$, where assertions (a) and (c) are no longer about probabilities.
See \cite{Z8} for asymptotic hitting-time distributions of certain sequences
$(E_{k})$ with $\mu(E_{k})=\infty$ for all $k$, but still satisfying (b), (d)
and (e).
\end{remark}

\vspace{0.3cm}

\noindent\textbf{Hitting times and inducing.} As with various other
assertions, proving a statement like (\ref{Eq_HITTLIM}) is often facilitated
by passing to a suitable (nicer) induced map $T_{Y}$, and studying the same
question for this new system. To this end, let $\varphi_{E}^{Y}:Y\rightarrow
\overline{\mathbb{N}}$ denote the hitting time of $E\in\mathcal{A}\cap Y$
under the first-return map $T_{Y}$, that is,
\begin{equation}
\varphi_{E}^{Y}(x):=\inf\{j\geq1:T_{Y}^{j}x\in E\}\text{,\quad}x\in Y\text{.}%
\end{equation}
Then the following inducing principle for hitting-time limits shows that (in
the standard case $\gamma_{k}=\mu(E_{k})$) it suffices to analyse the
distributions of the $\varphi_{E_{k}}^{Y}$ on $Y$.

\begin{theorem}
[\textbf{Hitting-time statistics via inducing; \cite{HWZ}}]%
\label{T_InduceHittingTimeStats}Let $(X,\mathcal{A},\mu,T)$ be an ergodic
probability-preserving system, and $Y\in\mathcal{A}$, $\mu(Y)>0$. Assume that
$(E_{k})_{k\geq1}$ is a sequence of asymptotically rare events in
$\mathcal{A}\cap Y$, and that $\mathsf{R}$ is any random variable with values
in $[0,\infty]$. Then
\begin{equation}
\,\mu_{Y}(E_{k})\,\varphi_{E_{k}}^{Y}\overset{\mu_{Y}}{\Longrightarrow
}\mathsf{R}\quad\text{as }k\rightarrow\infty\text{ }
\label{Eq_CgeOfInducedHittingTimes}%
\end{equation}
iff
\begin{equation}
\mu(E_{k})\,\varphi_{E_{k}}\overset{\mu}{\Longrightarrow}\mathsf{R}%
\quad\text{as }k\rightarrow\infty\text{.} \label{Eq_CgeOfOriginalHittingTimes}%
\end{equation}

\end{theorem}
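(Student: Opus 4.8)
The plan is to transfer the whole question to the induced system $(Y,\mathcal{A}\cap Y,\mu_{Y},T_{Y})$, which is again ergodic and probability preserving (by Kac' formula $\mu_{Y}$ is $T_{Y}$-invariant of total mass one), and to exploit that $E_{k}\subseteq Y$ forces $\mu_{Y}(E_{k})=\mu(Y)^{-1}\mu(E_{k})$; in particular $\mu_{Y}(E_{k})\to0$, so $(E_{k})$ is a sequence of asymptotically rare events for $T_{Y}$ as well. The crucial input is a pointwise identity on $Y$: for $x\in Y$, since $E_{k}\subseteq Y$, the hitting time $\varphi_{E_{k}}(x)$ can only be attained at a return time of $x$ to $Y$, whence
\begin{equation}
\varphi_{E_{k}}(x)=\sum_{i=0}^{\varphi_{E_{k}}^{Y}(x)-1}\varphi_{Y}(T_{Y}^{i}x),\qquad x\in Y. \label{Eq_plan_ident}
\end{equation}
Setting $a_{n}(x):=\tfrac{1}{n}\sum_{i=0}^{n-1}\varphi_{Y}(T_{Y}^{i}x)$ and abbreviating $N_{k}:=\varphi_{E_{k}}^{Y}$, identity (\ref{Eq_plan_ident}) combined with $\mu(E_{k})=\mu(Y)\,\mu_{Y}(E_{k})$ gives, on $Y$,
\begin{equation}
\mu(E_{k})\,\varphi_{E_{k}}=\bigl(\mu(Y)\,a_{N_{k}}\bigr)\cdot\bigl(\mu_{Y}(E_{k})\,\varphi_{E_{k}}^{Y}\bigr). \label{Eq_plan_factor}
\end{equation}

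Next I would show that the first factor in (\ref{Eq_plan_factor}) converges to $1$ in $\mu_{Y}$-measure. By Birkhoff's ergodic theorem for $T_{Y}$ on $(Y,\mu_{Y})$ and Kac' formula, $a_{n}\to\int\varphi_{Y}\,d\mu_{Y}=\mu(Y)^{-1}$ $\mu_{Y}$-a.e., hence $b_{M}:=\sup_{n\geq M}|a_{n}-\mu(Y)^{-1}|\to0$ $\mu_{Y}$-a.e., and thus in $\mu_{Y}$-measure, as $M\to\infty$. On the other hand, since $(E_{k})$ is asymptotically rare for $T_{Y}$, Proposition \ref{P_MeasureByMeasure} applied to $(Y,\mathcal{A}\cap Y,\mu_{Y},T_{Y})$ yields $N_{k}\overset{\mu_{Y}}{\longrightarrow}\infty$. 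A routine truncation then gives $a_{N_{k}}\overset{\mu_{Y}}{\longrightarrow}\mu(Y)^{-1}$: given $\varepsilon,\eta>0$, pick $M$ with $\mu_{Y}(b_{M}>\varepsilon)<\eta/2$ and then $k_{0}$ with $\mu_{Y}(N_{k}<M)<\eta/2$ for $k\geq k_{0}$, and use $|a_{N_{k}}-\mu(Y)^{-1}|\leq b_{M}$ on $\{N_{k}\geq M\}$. Hence $\mu(Y)\,a_{N_{k}}\overset{\mu_{Y}}{\longrightarrow}1$.

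To finish, I would combine the two factors in (\ref{Eq_plan_factor}) via a Slutsky-type argument valid for convergence in law on $[0,\infty]$ (with the continuity-point convention, using $\mu(Y)\,a_{N_{k}}>0$): multiplying, respectively dividing, a sequence converging in law by a sequence converging in $\mu_{Y}$-probability to the positive constant $1$ leaves the limit law unchanged. This shows that $\mu(E_{k})\,\varphi_{E_{k}}\overset{\mu_{Y}}{\Longrightarrow}\mathsf{R}$ holds iff (\ref{Eq_CgeOfInducedHittingTimes}) holds. Finally, $E_{k}\subseteq Y$ gives $\mu_{Y}\ll\mu$, so Theorem \ref{T_StrongDistrCgeHittingTimes}, applied to the original system $(X,\mathcal{A},\mu,T)$ with $\gamma_{k}=\mu(E_{k})\to0$, lets me replace $\overset{\mu_{Y}}{\Longrightarrow}$ by $\overset{\mu}{\Longrightarrow}$, i.e.\ $\mu(E_{k})\,\varphi_{E_{k}}\overset{\mu_{Y}}{\Longrightarrow}\mathsf{R}$ iff (\ref{Eq_CgeOfOriginalHittingTimes}). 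Chaining the two equivalences proves the theorem. The only real (though mild) obstacle is the random-index ergodic theorem of the second paragraph; the $[0,\infty]$-valued case of the Slutsky step needs just a one-line remark, and everything else is bookkeeping.
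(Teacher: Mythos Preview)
Your argument is correct: the pointwise identity (\ref{Eq_plan_ident}), the random-index ergodic theorem for the Birkhoff averages of $\varphi_{Y}$ under $T_{Y}$, the Slutsky step, and the final transfer of measure via Theorem~\ref{T_StrongDistrCgeHittingTimes} are all valid, and together they yield the equivalence. The only point worth a remark is the Slutsky step in the $[0,\infty]$-valued setting, which you already flag; since the multiplying factor $\mu(Y)\,a_{N_{k}}$ is a.s.\ finite and positive and converges in probability to $1$, the argument goes through for any continuity point $t\in[0,\infty)$ of the limit distribution, and the case $t=\infty$ is trivial.

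As for comparison with the paper: note that the paper does \emph{not} actually give a proof of Theorem~\ref{T_InduceHittingTimeStats}. It is quoted from \cite{HWZ}, and in fact the paper's own Theorem~\ref{T_InduceHittingTimeStatsNew} \emph{uses} Theorem~\ref{T_InduceHittingTimeStats} as an input in step (ii) of its proof, rather than deriving it. (The comment after Theorem~\ref{T_InduceHittingTimeStatsNew} that ``this result contains Theorem~\ref{T_InduceHittingTimeStats}'' is about the \emph{statement}, not about logical independence of the proofs.) Your proof is precisely the standard argument behind the cited result: factor the original hitting time over the returns to $Y$, apply Birkhoff--Kac to the ergodic averages of $\varphi_{Y}$ along $T_{Y}$, and then use strong distributional convergence to pass from $\mu_{Y}$ to $\mu$. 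So there is no discrepancy to report---you have supplied what the paper outsources.
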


A result of this flavour was first discussed in \cite{BruinEtAl} under
additional assumptions. Below we shall provide an even more flexible version,
and use it to study the exceptional situations mentioned in the introduction.

\section{Inducing hitting-time statistics - revisited}

The core of this paper is a very useful extension of Theorem
\ref{T_InduceHittingTimeStats} quoted above. It allows us to sometimes replace
a target set $E$, not necessarily contained in $Y$, by a more convenient set
$E^{\prime}$ inside $Y$. We shall say that \emph{points of }$Y$\emph{\ can
only reach }$E$\emph{\ via }$E^{\prime}$ if for every $n\geq0$ and a.e. $x\in
Y$,
\begin{equation}
T^{n}x\in E\text{ \quad implies that \quad}T^{j}x\in E^{\prime}\text{ for some
}j\in\{0,\ldots,n\}\text{,} \label{Eq_TheConditionClever}%
\end{equation}
that is, orbits starting in $Y$ cannot visit the set $E$ before $E^{\prime}$
is visited\footnote{In contrast to the definition of the first hitting-time,
we also take visits at time zero into account here. We do so in order to
obtain a flexible condition which can easily be applied to certain concrete
situations.}.

\begin{example}
\label{Exple_BasicScenario}\textbf{a)} Given an ergodic probability-preserving
system $(X,\mathcal{A},\mu,T)$, let $Y\in\mathcal{A} $, $\mu(Y)>0$, and
consider $E:=Y^{c}\cap\{\varphi_{Y}\geq i\}$ for some $i\geq1$. It is then
immediate that points of $Y$ can only reach $E$ via $E^{\prime}:=Y\cap
T^{-1}E=Y\cap\{\varphi_{Y}>i\}$.\newline\textbf{b)} More generally, if
$E\subseteq Y^{c}$ satisfies $Y^{c}\cap T^{-1}E\subseteq E$ (mod $\mu$), then
points of $Y$ can only reach $E$ via $E^{\prime}:=Y\cap T^{-1}E$. \newline
\end{example}

In case $E^{\prime}\subseteq E$ it is clear that, starting from $Y$, the first
visits to $E$ and $E^{\prime}$, respectively, must then coincide. We can cover
other interesting scenarios if, more generally, we only require that these
times do not differ too much. Then the hitting time distributions of $E$ and
$E^{\prime}$ will be comparable when the sets are small, as made precise in
the following result.

\begin{theorem}
[\textbf{Hitting-time statistics via inducing; extended version}%
]\label{T_InduceHittingTimeStatsNew}Let $(X,\mathcal{A},\mu,T)$ be an ergodic
probability-preserving system, and $(E_{k})_{k\geq1}$ a sequence of
asymptotically rare events. Suppose that $Y\in\mathcal{A}$ is a set of
positive measure, $(E_{k}^{\prime})_{k\geq1}$ a sequence in $\mathcal{A}\cap
Y$ such that, for every $k\geq1$, points of $Y$ can only reach $E_{k}$ via
$E_{k}^{\prime}$, and let $\mathsf{R}$ be any random variable with values in
$[0,\infty]$.\newline\textbf{a)} If
\begin{equation}
\mu(E_{k}^{\prime})(\varphi_{E_{k}}-\varphi_{E_{k}^{\prime}})\overset{\mu_{Y}%
}{\longrightarrow}0\text{ \quad as }k\rightarrow\infty\text{,}
\label{Eq_ChangeOfTimeSmallEnough}%
\end{equation}
then $(E_{k}^{\prime})$ is asymptotically rare, and
\begin{equation}
\mu_{Y}(E_{k}^{\prime})\,\varphi_{E_{k}^{\prime}}^{Y}\overset{\mu_{Y}%
}{\Longrightarrow}\,\mathsf{R}\text{ \quad as }k\rightarrow\infty\text{,}
\label{Eq_CgeHitForInducedSys}%
\end{equation}
holds iff
\begin{equation}
\mu(E_{k}^{\prime})\,\varphi_{E_{k}}\overset{\mu}{\Longrightarrow}%
\,\mathsf{R}\text{ \quad as }k\rightarrow\infty\text{.}
\label{Eq_CgeHitForOrigSys}%
\end{equation}
\textbf{b)} If there exists some constant $M\geq0$ such that $E_{k}^{\prime
}\subseteq{\textstyle\bigcup\nolimits_{m=0}^{M}}T^{-m}E_{k}$ for $k\geq1$,
then $(E_{k}^{\prime})$ satisfies assumption (\ref{Eq_ChangeOfTimeSmallEnough}%
) of a).
\end{theorem}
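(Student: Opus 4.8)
The plan is to reduce part (a) to the already-established Theorem 2.3 (HTS via inducing of \cite{HWZ}) by showing that, under hypothesis (\ref{Eq_ChangeOfTimeSmallEnough}), one may freely swap the two target sequences $E_k$ and $E_k'$ as well as the two normalizing constants $\mu(E_k)$ and $\mu(E_k')$ without changing the limit law. I would organize this into three links in a chain of equivalences.

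\emph{Step 1 (the easy exchange on $Y$).} First I would observe that Theorem 2.3 already gives the equivalence of (\ref{Eq_CgeHitForInducedSys}) with $\mu(E_k')\,\varphi_{E_k'}\overset{\mu}{\Longrightarrow}\mathsf{R}$ — this is precisely Theorem 2.3 applied with $E_k'\in\mathcal{A}\cap Y$ in the role of the rare events, provided $(E_k')$ is indeed asymptotically rare. So the real content is to (i) establish that $(E_k')$ is asymptotically rare and (ii) show
\[
\mu(E_k')\,\varphi_{E_k'}\overset{\mu}{\Longrightarrow}\mathsf{R}
\quad\Longleftrightarrow\quad
\mu(E_k')\,\varphi_{E_k}\overset{\mu}{\Longrightarrow}\mathsf{R}.
\]
For (i): since $\mu(E_k)\to0$, Proposition 2.2 gives $\varphi_{E_k}\overset{\mu_Y}{\longrightarrow}\infty$ (using $\mu_Y\ll\mu$); combined with the fact that $\varphi_{E_k}\geq 1$ and with hypothesis (\ref{Eq_ChangeOfTimeSmallEnough}) — which forces $\mu(E_k')\varphi_{E_k}$ and $\mu(E_k')\varphi_{E_k'}$ to have the same behaviour — one deduces $\mu(E_k')\to0$; I would then invoke Proposition 2.2 again (direction (e)$\Rightarrow$(a)) to conclude $(E_k')$ is asymptotically rare.

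\emph{Step 2 (comparing $\varphi_{E_k}$ and $\varphi_{E_k'}$ on $Y$, then on $X$).} The crucial point is that ``$Y$ can only reach $E_k$ via $E_k'$'' means that along any orbit started in $Y$ the set $E_k'$ is hit no later than $E_k$; together with $E_k'\subseteq Y$ this should give $\varphi_{E_k'}\leq \varphi_{E_k}$ $\mu_Y$-a.e. (a short argument using the definition (\ref{Eq_TheConditionClever}), taking care of the time-zero convention — if $x\in E_k'$ then the \emph{return} time $\varphi_{E_k'}(x)$ is still $\geq1$, but one re-enters $E_k'$ or reaches $E_k$ from there). Hence $0\le \mu(E_k')(\varphi_{E_k}-\varphi_{E_k'})$, and hypothesis (\ref{Eq_ChangeOfTimeSmallEnough}) says this difference goes to $0$ in $\mu_Y$-measure. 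By a standard Slutsky-type argument, $\mu(E_k')\varphi_{E_k}\overset{\mu_Y}{\Longrightarrow}\mathsf{R}$ iff $\mu(E_k')\varphi_{E_k'}\overset{\mu_Y}{\Longrightarrow}\mathsf{R}$. Finally, to pass from convergence in law with respect to $\mu_Y$ to convergence with respect to $\mu$, I would invoke Theorem 2.1 (strong distributional convergence): since $\mu_Y\ll\mu$ and $\mu(E_k')\to0$, convergence of $\mu(E_k')\varphi_{E_k}$ holds for $\mu_Y$ iff it holds for $\mu$, and likewise for $\varphi_{E_k'}$. Chaining these equivalences with Step 1 yields part (a).

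\emph{Step 3 (part (b)).} Here one simply notes that $E_k'\subseteq\bigcup_{m=0}^{M}T^{-m}E_k$ forces $\varphi_{E_k}\le\varphi_{E_k'}+M$ pointwise on $E_k'$ — if $x\in E_k'$ then $T^m x\in E_k$ for some $0\le m\le M$, so after at most $M$ further steps from any return to $E_k'$ one lands in $E_k$; more carefully, on all of $Y$ one combines $\varphi_{E_k'}\le\varphi_{E_k}$ from Step 2 with $\varphi_{E_k}\le\varphi_{E_k'}+M$ to get $0\le\varphi_{E_k}-\varphi_{E_k'}\le M$, hence $\mu(E_k')(\varphi_{E_k}-\varphi_{E_k'})\le M\,\mu(E_k')\to0$ uniformly, which is far stronger than (\ref{Eq_ChangeOfTimeSmallEnough}). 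This needs the fact that $\mu(E_k')\to0$, which is part of the conclusion of (a) but here must be obtained directly: from $E_k'\subseteq\bigcup_{m=0}^{M}T^{-m}E_k$ and $T$-invariance of $\mu$ one gets $\mu(E_k')\le (M+1)\mu(E_k)\to0$ at once.

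The step I expect to require the most care is the pointwise inequality $\varphi_{E_k'}\le\varphi_{E_k}$ on $Y$ in Step 2, because of the asymmetry between hitting times (which ignore time zero) and the ``reach via'' condition (which does not): one must check that the time-zero visits permitted in (\ref{Eq_TheConditionClever}) do not spoil the inequality for points of $Y$ that already lie in $E_k'$ or in $E_k$. This is a bookkeeping issue rather than a deep one, but it is where a careless argument would go wrong.
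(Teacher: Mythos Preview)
Your plan is correct and follows the same route as the paper: reduce to Theorem~2.3, pass between $\mu$ and $\mu_Y$ via Theorem~2.1, and use (\ref{Eq_ChangeOfTimeSmallEnough}) together with a Slutsky argument to interchange $\varphi_{E_k}$ and $\varphi_{E_k'}$ on $Y$. Two small remarks: for part~(a) the pointwise inequality $\varphi_{E_k'}\le\varphi_{E_k}$ is not needed at all---(\ref{Eq_ChangeOfTimeSmallEnough}) alone suffices for the Slutsky step---so you can drop that from Step~2 and only invoke it in part~(b); and in part~(b) the sandwich $\varphi_{E_k'}\le\varphi_{E_k}\le\varphi_{E_k'}+M$ is only guaranteed on $Y\setminus(E_k\cup E_k')$ (exactly the time-zero issue you flag), so one argues there and then observes $\mu_Y(Y\cap(E_k\cup E_k'))\to0$, which is how the paper closes the gap.
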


\vspace{0.2cm}

This result contains Theorem \ref{T_InduceHittingTimeStats}. (Take
$E_{k}=E_{k}^{\prime}\subseteq Y$, and $M=0$ in part b).)

\begin{remark}
[\textbf{The normalizing constants}]\label{Rem_cfcfcfccf}Note that the
normalizing factor on the left-hand side of (\ref{Eq_CgeHitForOrigSys}) really
is $\mu(E_{k}^{\prime})$, and not $\mu(E_{k})$. This is in fact one main point
of the result. The relation between $\mu(E_{k}^{\prime})$ and $\mu(E_{k})$
determines what really happens to $\varphi_{E_{k}}$. Let $\theta
:=\lim_{k\rightarrow\infty}\mu(E_{k}^{\prime})/\mu(E_{k})$ in case this limit exists.

\textbf{a)} If $\theta>0$, then (\ref{Eq_CgeHitForOrigSys}) is equivalent to
\begin{equation}
\mu(E_{k})\,\varphi_{E_{k}}\overset{\mu}{\Longrightarrow}\,\theta
^{-1}\mathsf{R}\text{ \quad as }k\rightarrow\infty\text{.}%
\end{equation}
This is what typically happens at repelling periodic points, see Theorem
\ref{T_TheWellKnownFolkloreFixedPoint} below.

\textbf{b)}\ There are also interesting situations in which $\theta=0$. In
this case the theorem identifies a possibly non-trivial limit variable
$\mathsf{R}$ for the hitting-times $\varphi_{E_{k}}$ on a scale $\mu
(E_{k}^{\prime})$ essentially different from the canonical scale $\mu(E_{k})$,
thus identifying the \textquotedblleft correct scale\textquotedblright\ for
these variables. Indeed, this is what happens in the case of neighbourhoods
$E_{k}$ of indifferent fixed points for probability-preserving intermittent
maps, see Theorem \ref{T_HTSatNeutralFP} below.

\textbf{c)} For a sequence of pairs $(E_{k},E_{k}^{\prime})$ which satisfy
$E_{k}^{\prime}=Y\cap T^{-1}E_{k}$ (as in Example \ref{Exple_BasicScenario})
b), we have $\theta=\lim_{k\rightarrow\infty}(1-\mu(Y^{c}\cap T^{-1}E_{k}%
)/\mu(E_{k}))$.
\end{remark}

\vspace{0.3cm}

\begin{proof}
[\textbf{Proof of Theorem \ref{T_InduceHittingTimeStatsNew}.}]\textbf{(i)} We
first show that $(E_{k}^{\prime})$ is asymptotically rare. Assume otherwise,
then there are $\delta>0$ and $k_{j}\nearrow\infty$ such that $\mu(E_{k_{j}%
}^{\prime})\geq\delta$ for $j\geq1$. Since $\mid\varphi_{E_{k}}-\varphi
_{E_{k}^{\prime}}\mid\geq1$ on $\{\varphi_{E_{k}}\neq\varphi_{E_{k}^{\prime}%
}\}$, (\ref{Eq_ChangeOfTimeSmallEnough}) ensures that
\begin{equation}
\mu_{Y}(\varphi_{E_{k_{j}}}\neq\varphi_{E_{k_{j}}^{\prime}})\longrightarrow
0\text{ \quad as }j\rightarrow\infty\text{.} \label{Eq_bcxvnvynvcybzzzzzzz}%
\end{equation}
By assumption $(E_{k})$ is asymptotically rare and Proposition
\ref{P_MeasureByMeasure} gives $\varphi_{E_{k}}\overset{\mu_{Y}}%
{\longrightarrow}\infty$. Due to (\ref{Eq_bcxvnvynvcybzzzzzzz}) we then get
$\varphi_{E_{k_{j}}^{\prime}}\overset{\mu_{Y}}{\longrightarrow}\infty$ as
well. Hence $\mu(E_{k_{j}}^{\prime})\rightarrow0$ by another application of
the proposition.\newline\newline\textbf{(ii)} By Theorem
\ref{T_InduceHittingTimeStats}, the convergence in
(\ref{Eq_CgeHitForInducedSys}) is equivalent to $\mu(E_{k}^{\prime}%
)\,\varphi_{E_{k}^{\prime}}\overset{\mu}{\Longrightarrow}\,\mathsf{R}$. Due to
Theorem \ref{T_StrongDistrCgeHittingTimes} this, in turn, is equivalent to
\begin{equation}
\mu(E_{k}^{\prime})\,\varphi_{E_{k}^{\prime}}\overset{\mu_{Y}}{\Longrightarrow
}\,\mathsf{R}\text{ \quad as }k\rightarrow\infty\text{.}
\label{Eq_sjdhgfkjagbfvka}%
\end{equation}
As a consequence of (\ref{Eq_ChangeOfTimeSmallEnough}), we see that
(\ref{Eq_sjdhgfkjagbfvka}) is equivalent to $\mu(E_{k}^{\prime})\,\varphi
_{E_{k}}\overset{\mu_{Y}}{\Longrightarrow}\,\mathsf{R}$, and in view of
Theorem \ref{T_StrongDistrCgeHittingTimes}, the latter is indeed the same as
(\ref{Eq_CgeHitForOrigSys}).\newline\newline\textbf{(iii)} As $T$ preserves
$\mu$, $E_{k}^{\prime}\subseteq{\textstyle\bigcup\nolimits_{m=0}^{M}}%
T^{-m}E_{k}$ entails $\mu(E_{k}^{\prime})\leq(M+1)\mu(E_{k})\rightarrow0$, so
that $(E_{k}^{\prime})$ is asymptotically rare. Next,
\begin{equation}
\varphi_{E_{k}^{\prime}}\leq\varphi_{E_{k}}\leq\varphi_{E_{k}^{\prime}%
}+M\text{ \quad a.e. on }Y\setminus(E_{k}\cup E_{k}^{\prime})\text{,}%
\end{equation}
since points of $Y$ can reach $E_{k}$ only via $E_{k}^{\prime}$, and
$E_{k}^{\prime}\subseteq{\textstyle\bigcup\nolimits_{m=0}^{M}}T^{-m}E_{k}$.
(We discard $Y\cap(E_{k}\cup E_{k}^{\prime})$ because
(\ref{Eq_TheConditionClever}) allows $0\in\{j,n\}$ in which case the time of
the first visit is not the first hitting time.) Therefore, $0\leq\mu
(E_{k}^{\prime})(\varphi_{E_{k}}-\varphi_{E_{k}^{\prime}})\leq M\mu
(E_{k}^{\prime})\rightarrow0$ on $Y\setminus(E_{k}\cup E_{k}^{\prime})$. But
$\mu_{Y}(Y\cap(E_{k}\cup E_{k}^{\prime}))\rightarrow0$ since both $(E_{k})$
and $(E_{k}^{\prime})$ are asymptotically rare. This implies
(\ref{Eq_ChangeOfTimeSmallEnough}).
\end{proof}

\vspace{0.3cm}

\section{Application to uniformly expanding interval maps}

\noindent\textbf{Piecewise monotone interval maps.} A \emph{piecewise
monotonic system} is a triple $(X,T,\xi)$, where $X$ is a bounded interval,
$\xi$ is a collection of nonempty pairwise disjoint open subintervals $Z$ of
$X$ with $\lambda(X\setminus{\textstyle\bigcup\nolimits_{Z\in\xi}} Z)=0$
(where $\lambda$ denotes Lebesgue measure), and $T:X\longrightarrow X$ is such
that each \emph{branch} of $T$, i.e. its restriction to any of its
\emph{cylinders} $Z\in\xi$ is a homeomorphism onto $TZ$. The system is
\emph{Markov} if $TZ\cap Z^{\prime}\neq\varnothing$ for $Z,Z^{\prime}\in\xi$
implies $Z^{\prime}\subseteq TZ$, and \emph{piecewise onto} if $TZ=X$ mod
$\lambda\ $for all $Z\in\xi$.

We focus on systems with $\mathcal{C}^{2}$ branches, and call such a system
$(X,T,\xi)$ a \emph{Folklore map} if it is piecewise onto, uniformly expanding
($\inf\left\vert T^{\prime}\right\vert >1$), and satisfies \emph{Adler's
condition}, meaning that $T^{\prime\prime}/(T^{\prime})^{2}$ \ is bounded. It
is well known that every Folklore map has a unique absolutely continuous
invariant probability measure $\mu$ the density of which admits a continuous
version $h$ bounded away from $0$ and $\infty$.\vspace{0.3cm}

\noindent\textbf{A common scenario with standard exponential limit.} We
briefly record an auxiliary observation about a type of asymptotically rare
events which often arise through inducing. The following is just an easy
variant of well-known results.

\begin{lemma}
[\textbf{Exponential limit for sets containing cylinders}]%
\label{T_MyCompactnessForExpoLimit}Let $(X,T,\xi)$ be a Folklore map and
$\mu\ll\lambda$ its invariant probability. Let $(E_{k}^{\prime})_{k\geq1} $ be
a sequence of intervals shrinking to an endpoint of $X$, and such that
$E_{k}^{\prime}=E_{k}^{\blacktriangle}\cup E_{k}^{\triangle}$ (mod $\lambda$)
with $E_{k}^{\blacktriangle}={\textstyle\bigcup\nolimits_{Z\in\xi:Z\subseteq
E_{k}}} Z$ a union of cylinders, and $E_{k}^{\triangle}$ a (possibly empty)
interval contained in a cylinder $F_{k}\in\xi$. If $\lambda(F_{k}%
)=O(\lambda(E_{k}^{\blacktriangle}))$ as $k\rightarrow\infty$, then
\begin{equation}
\mu(E_{k}^{\prime})\,\varphi_{E_{k}^{\prime}}\overset{\mu}{\Longrightarrow
}\mathcal{E}\text{ \quad as }k\rightarrow\infty\text{.}
\label{Eq_ExpoLimitInMyExpoLimitThm}%
\end{equation}

\end{lemma}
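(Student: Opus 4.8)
The plan is to reduce the statement to a known hitting-time result for Folklore maps via the inducing machinery of Section 3, using the fact that $E_k^\prime$ differs from a union of cylinders only by a controlled amount. First I would set up the inducing step: let $Y$ be the union of the cylinders $Z \in \xi$ that are \emph{not} contained in $E_k^\prime$ (or, more precisely, work with an induced Folklore map on a suitable interval away from the endpoint). The point of passing to such a $Y$ is that the first-return map $T_Y$ is again a Folklore-type map, and for such maps the standard exponential HTS is available for sequences of rare events that are sufficiently "cylinder-like". I would then invoke Theorem~\ref{T_InduceHittingTimeStatsNew} with the pair $(E_k, E_k^\prime) = (E_k^\blacktriangle, E_k^\prime)$ — or rather with $E_k$ replaced by the full cylinder union — to transfer the exponential limit between the induced system and the original one; the hypothesis $\lambda(F_k) = O(\lambda(E_k^\blacktriangle))$ is exactly what guarantees that $E_k^\triangle$ cannot spoil this, since it means $\mu(E_k^\prime)/\mu(E_k^\blacktriangle)$ stays bounded and bounded away from $0$, so $\theta = 1$ in the language of Remark~\ref{Rem_cfcfcfccf}.

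Concretely, I expect the argument to run as follows. Step one: verify that $(E_k^\prime)$ is asymptotically rare — immediate since the intervals shrink to an endpoint and $\mu \ll \lambda$ with bounded density. Step two: handle the "pure cylinder" case $E_k^\prime = E_k^\blacktriangle$. Here one uses the classical result (going back to the spectral-gap / transfer-operator analysis of the transfer operator $\widehat T$, or equivalently a Kac-type renewal argument) that for a Folklore map and a sequence of unions of cylinders shrinking to a point, the normalized hitting times converge to $\mathcal{E}$ — there is no extremal index obstruction because the endpoint of $X$ is not a periodic point of a branch in the interior (or, if it is a fixed point, the cylinder structure forces the "return" correction to vanish in the limit). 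Step three: reinstate the leftover interval $E_k^\triangle \subseteq F_k$. Write $E_k := E_k^\blacktriangle$ and $E_k^\prime := E_k^\blacktriangle \cup E_k^\triangle$; then trivially points of $Y$ can only reach $E_k$ via $E_k^\prime$ (indeed $E_k \subseteq E_k^\prime$, so condition~\eqref{Eq_TheConditionClever} holds with $j=n$), and $E_k^\prime \subseteq E_k \cup F_k$. Since $F_k$ is a single cylinder, $T$ maps it onto $X$, so $F_k \subseteq T^{-1}E_k^\blacktriangle$ once $E_k^\blacktriangle \ne \emptyset$; hence $E_k^\prime \subseteq E_k \cup T^{-1}E_k = \bigcup_{m=0}^{1} T^{-m} E_k$, and part~b) of Theorem~\ref{T_InduceHittingTimeStatsNew} (with $M=1$) gives assumption~\eqref{Eq_ChangeOfTimeSmallEnough}. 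Part~a) then yields that $\mu(E_k^\prime)\,\varphi_{E_k^\prime} \overset{\mu}{\Longrightarrow} \mathcal{E}$ is equivalent to $\mu(E_k^\prime)\,\varphi_{E_k} \overset{\mu}{\Longrightarrow} \mathcal{E}$; and since $\mu(E_k^\prime)/\mu(E_k^\blacktriangle) \to 1$ by the growth hypothesis $\lambda(F_k) = o(\lambda(E_k^\blacktriangle))$ — wait, only $O$, so one gets a bounded ratio and must be slightly more careful — the latter follows from step two.

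The main obstacle I anticipate is step two, the bare statement that $\mu(E_k^\blacktriangle)\,\varphi_{E_k^\blacktriangle} \overset{\mu}{\Longrightarrow} \mathcal{E}$. This is "well-known", but making it genuinely clean requires either citing the appropriate HTS theorem for Folklore / Gibbs–Markov maps (with the observation that a cylinder union shrinking to an endpoint carries no periodic-orbit defect, so the extremal index is $1$), or reproving it via a compactness argument: the densities of the conditional measures $\mu_{E_k^\blacktriangle}$, pushed forward, form a relatively compact family in $L^1(\mu)$ by bounded distortion (Adler's condition), every subsequential limit is an a.c. probability, and the quasi-compactness of $\widehat T$ on a suitable function space forces the hitting-time limit to be exponential with parameter $1$. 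I would most likely present step two by invoking an existing result (the lemma is explicitly billed as "an easy variant of well-known results"), and then spend the bulk of the proof on the elementary reduction in steps one and three, where the only real work is bookkeeping with $O(\cdot)$ and the harmless discarding of $Y \cap (E_k \cup E_k^\prime)$, whose $\mu_Y$-measure tends to $0$.
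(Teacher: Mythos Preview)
Your reduction strategy via Theorem~\ref{T_InduceHittingTimeStatsNew} does not go through, and the paper takes a completely different (direct) route.

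The concrete error is the claim that ``$F_k \subseteq T^{-1}E_k^{\blacktriangle}$ once $E_k^{\blacktriangle}\neq\varnothing$''. From $TF_k=X$ you only get $F_k \cap T^{-1}E_k^{\blacktriangle}\neq\varnothing$; the inclusion you assert would require $TF_k\subseteq E_k^{\blacktriangle}$, which is absurd since $E_k^{\blacktriangle}$ is tiny. Hence part~b) of Theorem~\ref{T_InduceHittingTimeStatsNew} is not available with any fixed $M$: a point of $E_k^{\triangle}$ need not enter $E_k^{\blacktriangle}$ for a very long time (typically of order $1/\mu(E_k^{\blacktriangle})$). For the same reason, the hypothesis~(\ref{Eq_ChangeOfTimeSmallEnough}) of part~a) fails as well: on the event $\{\varphi_{E_k^{\triangle}}<\varphi_{E_k^{\blacktriangle}}\}$, which has $\mu$-probability bounded below (since $\mu(E_k^{\triangle})/\mu(E_k^{\blacktriangle})$ need not tend to $0$ under the $O$-assumption), the gap $\varphi_{E_k^{\blacktriangle}}-\varphi_{E_k^{\prime}}$ is of the same order as $\varphi_{E_k^{\blacktriangle}}$ itself, so $\mu(E_k^{\prime})(\varphi_{E_k^{\blacktriangle}}-\varphi_{E_k^{\prime}})$ does not converge to $0$ in measure. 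You also flag but do not resolve the normalization mismatch: with only $\lambda(F_k)=O(\lambda(E_k^{\blacktriangle}))$, the ratio $\mu(E_k^{\prime})/\mu(E_k^{\blacktriangle})$ may oscillate in $[1,C]$ without converging, so even if your equivalence held it would not pin down the limit as $\mathcal{E}$.

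The paper does not induce at all here. It simply observes that the hypotheses place the sequence $(E_k^{\prime})$ inside the scope of existing machinery: either the framework of \cite{HSV}, by checking that $\mu_{E_k^{\prime}}(\varphi_{E_k^{\prime}}=1)\to 0$ and that the densities $\widehat{T}\bigl(\mu(E_k^{\prime})^{-1}1_{E_k^{\prime}}\bigr)$ have uniformly bounded variation (this is precisely where the condition $\lambda(F_k)=O(\lambda(E_k^{\blacktriangle}))$ is used, to control the single partial-cylinder contribution to the image density), or alternatively the spectral perturbation approach of \cite{KL}, \cite{Kell2012}. In other words, the lemma is the base case to which the inducing theorem is later applied, not a consequence of it.
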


\begin{proof}
This follows by routine arguments using, for example, the ideas of \cite{HSV}.
(The assumptions ensure $\mu_{E_{k}^{\prime}}(\varphi_{E_{k}^{\prime}%
}=1)\rightarrow0$, and that the densities $\widehat{T}(\mu(E_{k}^{\prime
})^{-1}1_{E_{k}^{\prime}})$, $k\geq1$, where $\widehat{T}$ denotes the
transfer operator of $T$ with respect to $\mu$, have uniformly bounded
variation on the unit interval.)

Alternatively, one can apply the perturbation theory of \cite{KL},
\cite{Kell2012} by slightly adapting the argument used for the Gauss map
example in \cite{KL}.
\end{proof}

\vspace{0.1cm}

\begin{remark}
\label{R_UnionOfCyls}Even easier, the same conclusion holds whenever the
$E_{k}^{\prime}$ are sets with $\lambda(E_{k}^{\prime})\rightarrow0$, and such
that each is a union of cylinders. (In this case the argument from \cite{KL}
applies without change.)
\end{remark}

\vspace{0.3cm}

\noindent\textbf{The exceptional behaviour of a Folklore map }$T$\textbf{\ at
a fixed point.} As a warm-up we now show that it is easy to employ Theorem
\ref{T_InduceHittingTimeStatsNew} to determine the hitting-time statistics for
small neighbourhoods of (uniformly repelling) periodic points of Folklore (or
similar) maps. Since this type of result is well known (see e.g.
\cite{Kell2012} or \cite{FFT}) and our emphasis is on the method rather than
the most general version, we focus on the most basic case of a fixed point of
a map with two branches.

\begin{example}
[\textbf{HTS for fixed points of simple Folklore maps}]%
\label{T_TheWellKnownFolkloreFixedPoint}Let $(X,T,\xi)$ be a Folklore map on
$X=[0,1]$ with two increasing branches, $\xi=\{(0,c),(c,1)\}$, and $\mu
\ll\lambda$ its invariant probability. Let $(E_{k})_{k\geq1}$ be a sequence of
intervals which contain the fixed point $x^{\ast}=0$, and such that
$\lambda(E_{k})\rightarrow0$. Then,
\begin{equation}
\mu(E_{k})\,\varphi_{E_{k}}\overset{\mu}{\Longrightarrow}\theta^{-1}%
\mathcal{E}\text{ \quad as }k\rightarrow\infty\text{,}
\label{Eq_TheGoodExceptionalLimit}%
\end{equation}
where $\theta:=1-1/T^{\prime}(0^{+})\in(0,1)$.
\end{example}

\begin{proof}
We use Theorem \ref{T_InduceHittingTimeStatsNew} to quickly derive
(\ref{Eq_TheGoodExceptionalLimit}) from the standard convergence guaranteed in
Lemma \ref{T_MyCompactnessForExpoLimit}: Let $Y:=(c,1)$ be the right-hand
cylinder, and write $E_{k}^{\prime}:=Y\cap T^{-1}E_{k}$. The local dynamics at
$x^{\ast}=0$, together with continuity of the invariant density $h=d\mu
/d\lambda$ give $\mu(Y^{c}\cap T^{-1}E_{k})\sim\mu(E_{k})/T^{\prime}(0^{+})$
as $k\rightarrow\infty$. But as $\mu$ is invariant, we have $\mu(E_{k}%
)=\mu(Y^{c}\cap T^{-1}E_{k})+\mu(E_{k}^{\prime})$. Hence,
\begin{equation}
\mu(E_{k}^{\prime})\sim\theta\,\mu(E_{k})\text{ \quad as }k\rightarrow
\infty\text{.} \label{Eq_cxbvnycvbyxcvcvcxyvcy}%
\end{equation}
It is a standard fact that the induced system $(Y,T_{Y},\xi_{Y})$ is a
Folklore map with infinitely many cylinders $W_{j}=Y\cap\{\varphi_{Y}=j\}$,
$j\geq1$. It is easy to see that $(E_{k}^{\prime})$ satisfies the assumptions
of Lemma \ref{T_MyCompactnessForExpoLimit} for this induced system. Indeed,
$E_{k}^{\prime}$ is (mod $\lambda$) an interval of the form $(c,c+\delta_{k}%
)$, and $\lambda(F_{k})=O(\lambda(E_{k}^{\blacktriangle}))$ follows from
$\lambda(W_{j})\sim T^{\prime}(0^{+})\,\lambda(W_{j+1})$ as $j\rightarrow
\infty$. Hence
\[
\mu(E_{k}^{\prime})\,\varphi_{E_{k}^{\prime}}^{Y}\overset{\mu_{Y}%
}{\Longrightarrow}\mathcal{E}\text{ \quad as }k\rightarrow\infty\text{.}%
\]
In view of Example \ref{Exple_BasicScenario}, Theorem
\ref{T_InduceHittingTimeStatsNew} applies. Combined with
(\ref{Eq_cxbvnycvbyxcvcvcxyvcy}) it gives (\ref{Eq_TheGoodExceptionalLimit}).
\end{proof}

\vspace{0.3cm}

\begin{remark}
[\textbf{Reformulation in terms of return-times}]In view of the general
duality between hitting-time statistics and return-time statistics established
in \cite{HaydnLacVai05}, (\ref{Eq_TheGoodExceptionalLimit}) is equivalent to
\begin{equation}
\mu(E_{k})\,\varphi_{E_{k}}\overset{\mu_{E_{k}}}{\Longrightarrow}%
\widetilde{\mathsf{R}}:=\Theta\cdot\theta^{-1}\mathcal{E}\text{ \quad as
}k\rightarrow\infty\text{,} \label{Eq_ncvsdbsdkbvab}%
\end{equation}
where the random variable $\Theta$ with $\Pr[\Theta=1]=1-\Pr[\Theta=0]=\theta$
is independent of $\mathcal{E}$. This is easily understood because a
subinterval $E_{k}\cap T^{-1}E_{k}$ of length $\lambda(E_{k}\cap T^{-1}%
E_{k})\sim\lambda(E_{k})/T^{\prime}(0^{+})=(1-\theta)\lambda(E_{k})$ re-enters
$E_{k}$ at once, which accounts for the atomic part $\Pr[\widetilde
{\mathsf{R}}=0]=1-\theta$ of $\widetilde{\mathsf{R}}$. The relations
(\ref{Eq_TheGoodExceptionalLimit}) and (\ref{Eq_ncvsdbsdkbvab}) can also be
rephrased in the language of extreme value statistics, see e.g. \cite{FFT}. In
that context, $\theta$ is called the \emph{extremal index}.
\end{remark}

\vspace{0.3cm}

\noindent\textbf{Maps with exceptionally unexceptional behaviour at a fixed
point.} Theorem \ref{T_TheWellKnownFolkloreFixedPoint} can be generalized to
repelling hyperbolic periodic points $x^{\ast}$ for other families of
interesting systems. In particular, it has been shown in \cite{Kell2012},
\cite{FFT} that in the context of uniformly expanding interval maps it
suffices to assume that $(X,T,\xi)$ is a \emph{Rychlik map} (that is, belongs
to the class studied in \cite{Ry}) with the \emph{additional assumptions} that
$T$ should be piecewise $\mathcal{C}^{1+\varepsilon}$ and that its invariant
density $h=d\mu/d\lambda$ should be bounded away from zero near $x^{\ast}$.

We now show that the latter condition cannot be dropped, even if the map has
very nice properties otherwise. To this end, we construct simple examples in
which the inducing principle of Theorem \ref{T_InduceHittingTimeStatsNew}
allows us to show that, in contrast to Example
\ref{T_TheWellKnownFolkloreFixedPoint}, \emph{neighbourhoods of hyperbolic
repelling fixed points of general Rychlik maps may still exhibit standard
exponential hitting time statistics}.

\begin{example}
[\textbf{Maps with standard HTS at uniform repellers}]%
\label{T_ExistenceOfRychlikMapsWithFunnyHTS1}There exist uniformly expanding
piecewise affine ergodic Rychlik maps $(X,T,\xi)$, which admit a fixed point
$x^{\ast}$ and neighbourhoods $E_{k}$ of $x^{\ast}$ satisfying $\lambda
(E_{k})\rightarrow0$ and
\begin{equation}
\mu(E_{k})\,\varphi_{E_{k}}\overset{\mu}{\Longrightarrow}\mathcal{E}\text{
\quad as }k\rightarrow\infty\text{,}%
\end{equation}
where $\mu$ denotes the unique absolutely continuous invariant probability measure.
\end{example}

\begin{proof}
\textbf{(i)} \emph{Structure of the map.} We are going to define a family of
uniformly expanding piecewise affine maps with big images on $X:=[0,1)$. In
particular, each of them is a Rychlik system $(X,T,\xi)$. They will be ergodic
w.r.t. Lebesgue measure $\lambda$, with unique right-continuous invariant
probability density $h=d\mu/d\lambda$ strictly positive on $(0,1) $, but with
$h(x)\rightarrow0$ as $x\searrow x^{\ast}:=0$. The fixed point $x^{\ast}$ is a
hyperbolic repeller, $T^{\prime}x^{\ast}=2$, but the sequence of cylinders
$E_{k}:=\xi_{k}(x^{\ast})\in\xi_{k}$ shrinking to this fixed point still
satisfies
\begin{equation}
\mu(E_{k})\,\varphi_{E_{k}}\overset{\mu}{\Longrightarrow}\mathcal{E}\text{
}\quad\text{as }k\rightarrow\infty\text{.} \label{Eq_GoalInExple007}%
\end{equation}

The basic partition takes the form $\xi=\{Z_{0},Z_{1},\ldots\}$ with
$Z_{j}=[z_{j},z_{j+1})$ for points $0=z_{0}<1/2=z_{1}<z_{2}<\ldots
<z_{j}\nearrow1$. The sequence $(z_{j})_{j\geq0}$, or equivalently the
sequence $(\lambda_{j})_{j\geq0}$ of lengths $\lambda_{j}:=z_{j+1}-z_{j}$,
will serve as a parameter which completely determines the system. For $x\in
Z_{0}=[0,1/2)$ set $Tx:=2x$, and let $Y:=Y_{0}:=Z_{0}^{c}=[1/2,1)$. Then, for
$i\geq1$, we see that $Y_{i}:=[2^{-(i+1)},2^{-i})=Y^{c}\cap\{\varphi_{Y}=i\}$.
Note also that $E_{k}:=\xi_{k}(x^{\ast})={\textstyle\bigcup\nolimits_{i\geq
k}} Y_{i}$ for $k\geq1$. On $Z_{j}$, $j\geq1$, we define $T$ to be decreasing
and affine, mapping $Z_{j}$ onto ${\textstyle\bigcup\nolimits_{i<j}}
Y_{i}\supseteq Y$, so that $T$ has slope $-s_{j}$ on $Z_{j}$ where
$s_{j}:=(1-2^{-j})/\lambda_{j}\sim\lambda_{j}^{-1}$ as $j\rightarrow\infty$.

For any $(z_{j})$ this gives a system $(X,T,\xi)$ for which $Y$ is a sweep-out
set (meaning that $X={\textstyle\bigcup\nolimits_{n\geq0}}T^{-n}Y$ (mod $\mu
$)). $T_{Y}$ is a pcw onto and pcw affine map, hence Folklore (and ergodic)
with invariant measure $\lambda_{Y}$ on $Y$. By standard arguments the
original map $T$ is therefore ergodic on $X$ w.r.t. $\lambda$. Also being a
Rychlik map, $T$ has a unique invariant probability density with a
right-continuous version $h$ of bounded variation. Note that $T$ is piecewise
affine and Markov for the finer partition $\xi^{\prime}:=\{\ldots Y_{2}%
,Y_{1},Z_{1},Z_{2},\ldots\}$. Therefore $h$ is constant on each element of the
partition $\{Y_{j}\}_{j\geq0}$ generated by the image sets $TZ^{\prime}$,
$Z^{\prime}\in\xi^{\prime}$. Hence, $h=\sum_{j\geq0}\eta_{j}1_{Y_{j}}$ with
$\eta_{j}=\mu(Y_{j})/\lambda(Y_{j})=2^{j+1}\mu(Y_{j})$. Below we give an
explicit description of the invariant measure $\mu$ in terms of $(\lambda
_{j})_{j\geq0}$, and show in particular that $h$ is strictly positive on
$(0,1)$.\newline\newline\textbf{(ii)} \emph{The invariant measure.} For a
probability $\mu$ with density of the form $h=\sum_{j\geq0}\eta_{j}1_{Y_{j}}$
to be $T$-invariant, we must first have
\[
\mu(Z_{1})=\mu(Y_{1}\cap T^{-1}Z_{1})+{\textstyle\sum\nolimits_{j\geq1}}%
\mu(Z_{j}\cap T^{-1}Z_{1})\text{.}%
\]
Since $Y_{1}$ is mapped onto $Y_{0}$ without distortion, $\mu(Y_{1}\cap
T^{-1}Z_{1})=\mu(Y_{1})\lambda(Z_{1})/\lambda(Y_{0})=\eta_{1}\lambda_{1}/2$,
and as $Z_{j}$ is mapped onto ${\textstyle\bigcup\nolimits_{i<j}}Y_{i}$
without distortion, $\mu(Z_{j}\cap T^{-1}Z_{1})=\mu(Z_{j})\lambda
(Z_{1})/\lambda({\textstyle\bigcup\nolimits_{i<j}}Y_{i})=\eta_{0}\lambda
_{1}s_{j}^{-1}$. Letting $\sigma(m):=\eta_{0}2^{-(m+1)}\sum_{i>m}s_{i}^{-1} $,
$m\geq0$, this leads to $\mu(Y_{0})=\mu(Y_{1})+\sigma(0)$.

Likewise, for every $j\geq1$, $\mu(Y_{j})=\mu(Y_{j+1})+{\textstyle\sum
\nolimits_{i>j}} \mu(Z_{i}\cap T^{-1}Y_{j})$, and using the definition of the
individual branches this becomes $\mu(Y_{j})=\mu(Y_{j+1})+\sigma(j)$.

We therefore see that
\begin{equation}
\mu(Y_{j})={\textstyle\sum\nolimits_{i\geq j}} \sigma(i)\text{ \quad for
}j\geq0\text{.} \label{Eq_CharacterizeMuInExple}%
\end{equation}
\textbf{(iii)} \emph{Hitting-time statistics.} We first consider the induced
map $T_{Y}$ and $E_{k}^{\prime}:=Y\cap T^{-1}E_{k}$, $k\geq1$. The latter
defines asymptotically rare events in $Y$, and it is easily seen that each
$E_{k}^{\prime}$ is a union of cylinders from $\xi_{Y}$ (recall that $T$ is
Markov for $\xi^{\prime}$). Consequently (see Lemma
\ref{T_MyCompactnessForExpoLimit} and Remark \ref{R_UnionOfCyls}),
\begin{equation}
\mu(E_{k}^{\prime})\,\varphi_{E_{k}^{\prime}}^{Y}\overset{\mu_{Y}%
}{\Longrightarrow}\mathcal{E}\text{ }\quad\text{as }k\rightarrow\infty\text{.}%
\end{equation}
Due to Theorem \ref{T_InduceHittingTimeStatsNew}, then
\[
\mu(E_{k}^{\prime})\,\varphi_{E_{k}}\overset{\mu}{\Longrightarrow}%
\mathcal{E}\text{ }\quad\text{as }k\rightarrow\infty\text{,}%
\]
and our claim (\ref{Eq_GoalInExple007}) follows in case $\mu(E_{k}^{\prime
})\sim\mu(E_{k})$ as $k\rightarrow\infty$. However, we are exactly in the
situation of Remark \ref{Rem_cfcfcfccf}, meaning that the latter is fulfilled
iff%
\begin{equation}
{\textstyle\sum\nolimits_{j>k}} \mu(Y_{j})=o(\mu(Y_{k}))\text{ \quad as
}k\rightarrow\infty\text{,} \label{Eq_hjdsfgkajgfkjfg65478151}%
\end{equation}
because $E_{k}={\textstyle\bigcup\nolimits_{j\geq k}} Y_{j}$. In view of
(\ref{Eq_CharacterizeMuInExple}) and the definition of $\sigma(m)$ this holds
if $\lambda_{j}\searrow0$ sufficiently fast.

To get specific examples, note that if the $\lambda_{i}$ are such that
$\lambda_{i+1}\leq2^{-(i+3)}\lambda_{i}$ for $i\geq k_{0}$, then (using
$\lambda_{i}/2\leq s_{i}^{-1}\leq\lambda_{i}$) we find that $\mu(Y_{k+1}%
)\leq2^{-(k+1)}\mu(Y_{k})$ for $k\geq k_{0}$, which implies
(\ref{Eq_hjdsfgkajgfkjfg65478151}).
\end{proof}

\vspace{0.3cm}

\begin{remark}
In the particular version of the maps we describe in the proof, the $E_{k}$
are the \emph{one-sided} neighbourhoods $[0,2^{-k})$ of $x^{\ast}$. It is also
easy to construct a variant in which the $E_{k}$ are symmetric
\emph{two-sided} neighbourhoods of a repeller which lies in the center of some cylinder.
\end{remark}

\vspace{0.3cm}

\section{A limit theorem for indifferent fixed points}

\noindent\textbf{Previous results.} In a recent paper \cite{FFTV} the authors
have studied hitting-time limits for a concrete parametrized family of
non-uniformly expanding interval maps $T_{p}:[0,1]\rightarrow\lbrack0,1]$
possessing an indifferent fixed point at $x^{\ast}=0$,
\begin{equation}
T_{p}x:=\left\{
\begin{array}
[c]{cc}%
x+2^{p}x^{1+p} & \text{for }x<1/2\text{,}\\
2x-1 & \text{for }x>1/2\text{,}%
\end{array}
\right.  \label{Eq_ZeMaps}%
\end{equation}
with parameter $p\in(0,1)$. In this parameter range, each $T_{p}$ posesses a
unique absolutely continuous (w.r.t. Lebesgue measure $\lambda$) invariant
probability measure $\mu_{p}$ with density $h_{p}$ strictly positive and
continuous on $(0,1]$. Among other things, the hitting-time distributions of
small neighbourhoods $[0,\epsilon]$ of the distinguished point $x^{\ast}=0$
were analysed and shown to converge to a standard exponential random variable
$\mathcal{E}$, but under a normalization essentially different from the usual
factor $\mu_{p}([0,\epsilon])\approx\epsilon^{1-p}$ as $\epsilon\searrow0$.
More precisely, in \cite{FFTV} this result was established, using extreme
value theory, only under the assumption that $p\in(0,\sqrt{5}-2)$. The authors
state the conjecture that the same assertion should be true at least for
$p\in(0,1/2)$.

In the present section we clarify the asymptotics of the hitting-time
distributions of these exceptional\ families of rare events by extending the
result of \cite{FFTV} to arbitrary $p\in(0,1)$ and, in fact, to a more general
class of maps. \emph{We emphasize that no assumption on the analytical
behaviour at the indifferent point akin to regular variation is needed, and
that the argument does not use information on the decay of correlations for}
$T_{p}$. Instead, we are going to use Theorem
\ref{T_InduceHittingTimeStatsNew} in a straightforward manner. The
significantly more technical results from extreme value theory employed in
\cite{FFTV} are not required.

\begin{remark}
[\textbf{The infinite measure case,} $p\geq1$]Asymptotic hitting-time
distributions of neighbourhoods of neutral fixed points for \emph{infinite
measure preserving} situations (as encountered when $T=T_{p}$ as in
(\ref{Eq_ZeMaps}) but with parameter $p\geq1$) have been obtained in \cite{Z8}.

For cylinders shrinking to \emph{typical} points of such a null-recurrent map,
hitting-time statistics have been clarified more recently in \cite{PSZ2}, see
also \cite{RZ}.
\end{remark}

\vspace{0.3cm}

\noindent\textbf{The behaviour of an intermittent map at the neutral source.}
Again we formulate the result in the setup of simple maps with two full
branches. It can be extended to more general maps and periodic points via
routine arguments.

\begin{theorem}
[\textbf{HTS for neutral fixed points of simple maps}]\label{T_HTSatNeutralFP}%
Let $(X,T,\xi)$ be piecewise increasing with $X=[0,1] $ and $\xi
=\{(0,c),(c,1)\}$, mapping each $Z\in\xi$ onto $(0,1)$. Assume that
$T\mid_{(c,1)}$ admits a uniformly expanding $\mathcal{C}^{2}$ extension to
$[c,1]$, while $T\mid_{(0,c)}$ extends to a $\mathcal{C}^{2}$ map on $(0,c]$
and is expanding except for an indifferent fixed point at $x^{\ast}=0$: for
every $\varepsilon>0$ there is some $\rho(\varepsilon)>1 $ such that
$T^{\prime}\geq\rho(\varepsilon)$ on $[\varepsilon,c]$, while $T0=0 $ and
$\lim\nolimits_{x\searrow0}T^{\prime}x=1$ with $T^{\prime}$ increasing on some
$(0,\delta)$. Suppose also that
\begin{gather}
\text{there is a continuous decreasing function }g\text{ on }(0,c]\text{
with}\nonumber\\
\int_{0}^{c}g(x)\,dx<\infty\text{ \quad and \quad}\left\vert T^{\prime\prime
}\right\vert \leq g\text{ on }(0,c]\text{.} \label{Eq_MaxMax}%
\end{gather}
Let $(E_{k})_{k\geq1}$ be a sequence of intervals which contain the fixed
point $x^{\ast}$, and such that $\lambda(E_{k})\rightarrow0$. Then,
\begin{equation}
\mu(E_{k})\,\varphi_{E_{k}}\overset{\mu}{\Longrightarrow}\infty\quad\text{as
}k\rightarrow\infty\text{,} \label{Eq_bncvhsjfgsv}%
\end{equation}
where $\mu\ll\lambda$ is the unique invariant probability measure of $T$. Much
more precisely,%
\begin{equation}
\tfrac{h(c)}{T^{\prime}(c^{+})}\,\lambda(E_{k})\,\varphi_{E_{k}}\overset{\mu
}{\Longrightarrow}\mathcal{E}\text{ \quad as }k\rightarrow\infty\text{,}
\label{Eq_hdgfjashdfjhsadfhsdafghj}%
\end{equation}
where $h$ is the continuous version of the invariant density $d\mu/d\lambda$.
\end{theorem}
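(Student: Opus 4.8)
The plan is to induce on the ``good'' cylinder $Y:=(c,1)$ and to combine the extended inducing principle, Theorem \ref{T_InduceHittingTimeStatsNew}, with Lemma \ref{T_MyCompactnessForExpoLimit}. Since each $E_k$ is an interval containing $x^{\ast}=0$ with $\lambda(E_k)\to0$, we may take $E_k=[0,\epsilon_k)$ (mod $\lambda$), $\epsilon_k=\lambda(E_k)\to0$. The first task is to record that the first-return system $(Y,T_Y,\xi_Y)$ is a Folklore map. Its branches are the intervals $\{x\in Y:\varphi_{Y}(x)=n\}$, $n\ge1$, each mapped bijectively onto $Y$; the branch $\{\varphi_{Y}=1\}$ inherits the uniform expansion of $T|_{[c,1]}$, and since $T(c^{+})=0$ the branches with large $n$ are tiny intervals accumulating at the left endpoint $c$ of $Y$, hence carry large slope, so that $\inf|T_Y^{\prime}|>1$. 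The verification of bounded distortion, and hence of Adler's condition, for $T_Y$ along the orbit pieces that linger near the neutral source is precisely where hypothesis \eqref{Eq_MaxMax} enters; this distortion estimate is the main technical point of the proof, although it is routine given standard techniques for such maps.

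Next, put $E_k^{\prime}:=Y\cap T^{-1}E_k$. Because $T^{\prime}$ is increasing on some $(0,\delta)$ with $T^{\prime}(0^{+})=1$, we have $T^{\prime}>1$ and so $Ty>y$ on $(0,\delta)$; thus $Y^{c}\cap T^{-1}E_k=(0,\beta_k)$ (mod $\lambda$) with $T\beta_k=\epsilon_k>\beta_k$, whence $Y^{c}\cap T^{-1}E_k\subseteq E_k$, and by Example \ref{Exple_BasicScenario}b) points of $Y$ can only reach $E_k$ via $E_k^{\prime}$. Moreover $E_k^{\prime}\subseteq T^{-1}E_k$, so part b) of Theorem \ref{T_InduceHittingTimeStatsNew} applies with $M=1$. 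Now $E_k^{\prime}=(c,c_k)$ with $c_k\searrow c$, and it has exactly the shape required by Lemma \ref{T_MyCompactnessForExpoLimit} for the Folklore map $T_Y$: it is the union of the cylinders $\{\varphi_{Y}=n\}$ with $n$ large, together with part of the next cylinder $F_k$, and the condition $\lambda(F_k)=O(\lambda(E_k^{\blacktriangle}))$ reduces to boundedness of the ratios $c_{m-1}/c_m$, where $c_m:=v_0^{m}(c)\searrow0$ and $v_0$ denotes the inverse of the left branch; this holds because $c_m/c_{m-1}\to1$, a direct consequence of the neutrality $T^{\prime}(0^{+})=1$. Hence Lemma \ref{T_MyCompactnessForExpoLimit} gives $\mu_{Y}(E_k^{\prime})\,\varphi_{E_k^{\prime}}^{Y}\overset{\mu_{Y}}{\Longrightarrow}\mathcal{E}$, and Theorem \ref{T_InduceHittingTimeStatsNew}a) then yields
\[
\mu(E_k^{\prime})\,\varphi_{E_k}\overset{\mu}{\Longrightarrow}\mathcal{E}\qquad\text{as }k\to\infty .
\]

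It remains to rewrite the normalizing constant. Since $T(c^{+})=0$ and $T$ is $\mathcal{C}^{1}$ at $c^{+}$, $\epsilon_k=T(c_k)\sim T^{\prime}(c^{+})\,(c_k-c)$, so $\lambda(E_k^{\prime})=c_k-c\sim\lambda(E_k)/T^{\prime}(c^{+})$; and as the continuous version of $h=d\mu/d\lambda$ is continuous at $c$ with $h(c)>0$, one gets $\mu(E_k^{\prime})=\int_{c}^{c_k}h\,d\lambda\sim h(c)\,\lambda(E_k^{\prime})$. Combining, $\mu(E_k^{\prime})\sim\tfrac{h(c)}{T^{\prime}(c^{+})}\lambda(E_k)$, and replacing the normalizer in the last display by this asymptotically equivalent quantity gives \eqref{Eq_hdgfjashdfjhsadfhsdafghj}. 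Finally, writing $\mu(E_k)\,\varphi_{E_k}=\tfrac{\mu(E_k)}{\mu(E_k^{\prime})}\cdot\mu(E_k^{\prime})\,\varphi_{E_k}$, assertion \eqref{Eq_bncvhsjfgsv} follows once $\mu(E_k)/\mu(E_k^{\prime})\to\infty$; since $\mu(E_k^{\prime})\sim\tfrac{h(c)}{T^{\prime}(c^{+})}\epsilon_k$ while $\mu(E_k)=\mu([0,\epsilon_k))$, this reduces to $\mu([0,\epsilon_k))/\epsilon_k\to\infty$, i.e.\ to $h(x)\to\infty$ as $x\searrow0$. The latter I would get directly from the transfer-operator identity $h(x)=h(v_0x)/T^{\prime}(v_0x)+h(v_1x)/|T^{\prime}(v_1x)|$ (with $v_1$ the inverse of the right branch): for $x$ near $0$ the last term is $\ge\beta:=\tfrac12 h(c)/|T^{\prime}(c^{+})|>0$, so iterating $h(x)\ge h(v_0x)/T^{\prime}(v_0x)+\beta$ and using $v_0^{j}x<x$ together with $T^{\prime}$ increasing (hence $T^{\prime}(v_0^{j}x)\le T^{\prime}(x)$) gives $h(x)\ge\beta\sum_{l\ge0}T^{\prime}(x)^{-l}=\beta/(1-1/T^{\prime}(x))\to\infty$ as $x\searrow0$. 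This is the $\theta=0$ situation of Remark \ref{Rem_cfcfcfccf}b).
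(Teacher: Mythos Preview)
Your proof is correct and follows essentially the same route as the paper: induce on $Y=(c,1)$, verify that $T_Y$ is Folklore (with Adler's condition coming from \eqref{Eq_MaxMax}), apply Lemma~\ref{T_MyCompactnessForExpoLimit} to the sets $E_k'=Y\cap T^{-1}E_k$, transfer back via Theorem~\ref{T_InduceHittingTimeStatsNew}, and identify $\mu(E_k')\sim\tfrac{h(c)}{T'(c^{+})}\lambda(E_k)$. The one notable difference is the last step: the paper obtains $\mu(E_k')=o(\mu(E_k))$ by asserting $\mu(Y^{c}\cap T^{-1}E_k)\sim\mu(E_k)/T'(0^{+})=\mu(E_k)$, while you derive it from an explicit transfer-operator argument showing $h(x)\to\infty$ as $x\searrow0$; your treatment of this point is more self-contained.
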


\begin{proof}
Let $E_{k}^{\prime}:=Y\cap T^{-1}E_{k}$ with $Y:=(c,1)\in\xi$ the right-hand
cylinder. Then
\begin{equation}
T_{Y}\text{ is a Folklore map.} \label{Eq_InduceFolkloresdkjhfka}%
\end{equation}
Specifically, with $c_{0}:=0$, $c_{1}:=c$, and $c_{j+1}$ such that
$c_{j+1}<Tc_{j+1}=c_{j}$ for $j\geq1$, the sets $V_{j}:=(c_{j},c_{j-1})$
accumulate at $x^{\ast}$ and satisfy $\lambda(V_{j+1})\sim\lambda(V_{j})$
since $V_{j}=TV_{j+1}$ and $\lim\nolimits_{x\searrow x^{\ast}}T^{\prime}x=1 $.
In particular, $\lambda(V_{j})=o(\lambda(%
{\textstyle\bigcup\nolimits_{i>j}}
V_{i}))$ as $j\rightarrow\infty$. Since $V_{1}=Y$, the cylinders of $T_{Y}$
are the sets $W_{j}:=Y\cap T^{-1}V_{j-1}=Y\cap\{\varphi_{Y}=j\}$, $j\geq1$,
which accumulate at $c^{+}$ and satisfy $\lambda(W_{j})\sim\lambda
(V_{j})/T^{\prime}c^{+}$. Hence,
\begin{equation}
\lambda(W_{j})=o(\lambda(%
{\textstyle\bigcup\nolimits_{i>j}}
W_{i}))\text{ \quad as }j\rightarrow\infty\text{.} \label{Eq_fadxcfadss}%
\end{equation}
Adler's condition for $T_{Y}$ follows from assumption (\ref{Eq_MaxMax}) by an
analytic argument which goes back to \cite{T1}, see for example \S 3 of
\cite{Z3} or \S 4 of \cite{T3}.

As observed in Example \ref{Exple_BasicScenario}, points of $Y$ can enter
$E_{k}$ only via $E_{k}^{\prime}$, and Lemma \ref{T_MyCompactnessForExpoLimit}
for the induced system yields $\mu_{Y}(E_{k}^{\prime})\,\varphi_{E_{k}%
^{\prime}}^{Y}\overset{\mu_{Y}}{\Longrightarrow}\mathcal{E}$. Indeed,
$E_{k}^{\prime}$ is a one-sided neighborhood of $c$, so that (up to endpoints
of cylinders) $%
{\textstyle\bigcup\nolimits_{i>j(k)}}
W_{i}\subseteq E_{k}^{\prime}\subseteq%
{\textstyle\bigcup\nolimits_{i\geq j(k)}}
W_{i}$ for suitable $j(k)\rightarrow\infty$. Consequently, using the notation
of the lemma and (\ref{Eq_fadxcfadss}) above, $E_{k}^{\blacktriangle}=%
{\textstyle\bigcup\nolimits_{i>j(k)}}
W_{i}$ and $F_{k}=W_{j(k)}$ satisfy $\lambda(F_{k})=o(\lambda(E_{k}%
^{\blacktriangle})) $.
  
Now Theorem \ref{T_InduceHittingTimeStatsNew} shows that
\begin{equation}
\mu(E_{k}^{\prime})\,\varphi_{E_{k}}\overset{\mu}{\Longrightarrow}%
\mathcal{E}\quad\text{as }k\rightarrow\infty. \label{Eq_cbvcbxvxbcvymvffffff}%
\end{equation}
Straightforward calculation, using continuity of $h$ on $Y$, shows that
\[
\mu(E_{k}^{\prime})\sim\frac{h(c)}{T^{\prime}(c^{+})}\lambda(E_{k})\text{
\quad as }k\rightarrow\infty\text{,}%
\]
proving (\ref{Eq_hdgfjashdfjhsadfhsdafghj}). But then (\ref{Eq_bncvhsjfgsv})
follows at once, since in the present situation, $\mu(E_{k}^{\prime}%
)=\mu(E_{k})-\mu(Y^{c}\cap T^{-1}E_{k})$ satisfies $\mu(E_{k}^{\prime}%
)=o(\mu(E_{k}))$. This is because $\mu(Y^{c}\cap T^{-1}E_{k})\sim\mu
(E_{k})/T^{\prime}(0^{+})\sim\mu(E_{k})$ as $k\rightarrow\infty$.
\end{proof}

\vspace{0.3cm}

\begin{remark}
Under the assumptions of the theorem, the return-time distributions (law of
$\varphi_{E_{k}}$ under $\mu_{E_{k}}$) are of limited interest, since
$\mu_{E_{k}}(\varphi_{E_{k}}=1)\rightarrow1$.
\end{remark}

\begin{remark}
In the statement of the theorem, condition (\ref{Eq_MaxMax}) can be replaced
by its consequence (\ref{Eq_InduceFolkloresdkjhfka}) because the proof only
depends on the latter property.
\end{remark}

\begin{example}
Each map $T=T_{p}$ from (\ref{Eq_ZeMaps}) with $p\in(0,1)$ trivially satisfies
(\ref{Eq_MaxMax}). Therefore, whenever $E_{k}=[0,\epsilon_{k}]$ with
$\epsilon_{k}\rightarrow0$, the theorem shows that
\begin{equation}
\tfrac{h_{p}(\frac{1}{2})}{2}\,\epsilon_{k}\,\varphi_{E_{k}}\overset{\mu
}{\Longrightarrow}\mathcal{E}\text{ \quad as }k\rightarrow\infty\text{.}
\label{Eq_FinalExpleSeq}%
\end{equation}
Consequently, we also have (generalizing \cite{FFTV})
\begin{equation}
\tfrac{h_{p}(\frac{1}{2})}{2}\,\epsilon\,\varphi_{\lbrack0,\epsilon]}%
\overset{\mu}{\Longrightarrow}\mathcal{E}\text{ \quad as }\epsilon
\searrow0\text{.} \label{Eq_FinalExpleCts}%
\end{equation}
(Otherwise there would be some sequence $(\epsilon_{k})$ violating
(\ref{Eq_FinalExpleSeq}).)
\end{example}

\vspace{0.3cm}

\end{document}